\numberwithin{equation}{section}
\def\Z{\mathbb Z}
\def\F{\mathbb F}
\def\C{\mathbb C}
\def\Q{\mathbb Q}
\newtheorem{theorem}{Theorem}[section]
\newtheorem{lemma}[theorem]{Lemma}
\theoremstyle{remark}
\theoremstyle{definition}
\theoremstyle{remark}
\newtheorem{example}[theorem]{Example}
\numberwithin{equation}{section}
\begin{document}
\title{A polynomial with a root mod $p$ for every $p$ has a real root}

\author{Rodrigo Angelo}
\address{Department of Mathematics, Stanford University, Stanford, CA 94305}
\email{rsangelo@stanford.edu}

\author{Max Wenqiang Xu}
\address{Department of Mathematics, Stanford University, Stanford, CA 94305}
\email{maxxu@stanford.edu}

\maketitle
\begin{abstract}
We prove that if a polynomial with rational coefficients has a root mod $p$ for every large prime $p$, then it has a real root. As an application, we show that the primes can't be covered by finitely many positive definite binary quadratic forms.
\end{abstract}

\section{Introduction}
It is known that if a polynomial irreducible over $\mathbb{Q}[x]$ has a root mod $p$ for every prime $p$, then it has a rational root (and is therefore linear) \cite{Lenstra}.   This is subtler if the irreducibility condition is dropped, and there exist polynomials e.g.,
\begin{equation}\label{examples}
    \begin{gathered}
(x^2+1)(x^2+2)(x^2-2) ~\text{and}\\
(x^2+x+1)(x^3-2)
\end{gathered}
\end{equation}
\cite{Brandl} that have a root mod $p$ for every prime $p$, yet no rational root. The first polynomial always has a root mod $p$ because of the identity of Legendre symbols $\left(\frac{-1}{p}\right)\left(\frac{-2}{p}\right) = \left(\frac{2}{p}\right)$, which ensures that always at least one of $-1$, $-2$ or $2$ is a square mod $p$. In the second, due to quadratic reciprocity $x^2+x+1$ has a root unless $p \equiv 2$ mod $3$, but in that case $x = 2^{\frac{2p-1}{3}}$ is a solution to $x^3 \equiv 2$ mod $3$. 

Notice however that these two polynomials still have a real root. We show this holds generally.

 \begin{theorem}\label{real root}
Let $f\in \mathbb{Q}[x]$ be a polynomial that has a root mod $ p$ for every large prime $p$. Then $f$ has a real root.
\end{theorem}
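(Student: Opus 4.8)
The plan is to argue by contraposition: suppose $f$ has no real root, and produce infinitely many primes $p$ for which $f$ has no root mod $p$. Since $f$ has no real root, every irreducible factor of $f$ over $\mathbb{Q}$ has even degree, and in fact $f(x)$ has constant sign on $\mathbb{R}$; by replacing $f$ with $-f$ if necessary and clearing denominators, I may assume $f\in\mathbb{Z}[x]$ with positive leading coefficient and $f(x)>0$ for all $x\in\mathbb{R}$. Write $f=c\prod_i g_i^{e_i}$ with the $g_i\in\mathbb{Z}[x]$ distinct, irreducible, each of even degree. The key reduction is that $f$ has a root mod $p$ if and only if some $g_i$ does, so it suffices to find infinitely many $p$ at which \emph{none} of the $g_i$ has a root.

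For a single irreducible $g$ of even degree $2m$, let $K$ be its splitting field and $G=\mathrm{Gal}(K/\mathbb{Q})\le S_{2m}$ acting transitively on the roots. By the Chebotarev density theorem, the density of primes $p$ for which $g$ has a root mod $p$ equals the proportion of elements of $G$ having at least one fixed point on the $2m$ roots; call this set $C\subseteq G$. The crucial point — this is where the hypothesis "even degree, no real root" must be used — is that $C\neq G$, i.e. $G$ contains a fixed-point-free element. Here I would invoke the classical theorem of Jordan: a finite group acting transitively on a set of size $\ge 2$ always contains an element with no fixed points. Thus for each $i$ there is a positive-density set of primes avoided by $g_i$, but density alone does not let me intersect finitely many such sets and stay nonempty — I need a single prime (indeed infinitely many) avoided by all $g_i$ simultaneously.

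To handle all factors at once, I would pass to the compositum $L$ of the splitting fields of the $g_i$, with $G=\mathrm{Gal}(L/\mathbb{Q})$, which surjects onto each $G_i=\mathrm{Gal}(L_i/\mathbb{Q})$. I want an element $\sigma\in G$ whose image in each $G_i$ is fixed-point-free on the roots of $g_i$; Chebotarev then yields infinitely many primes $p$ with Frobenius conjugate to $\sigma$, and for all such $p$ (unramified, not dividing the discriminant or leading coefficients) no $g_i$ — hence not $f$ — has a root mod $p$, completing the contrapositive. The existence of such a $\sigma$ is the heart of the matter and the main obstacle: it is \emph{not} automatic, since a fixed-point-free element of $G_i$ need not lift compatibly across all $i$. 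I expect to resolve this by a counting/union-bound argument: for each $i$, the preimage in $G$ of the "bad" set $C_i\subseteq G_i$ (elements with a fixed point) has density equal to the density of $C_i$ in $G_i$, and one shows (again via Jordan's theorem, now applied judiciously, possibly together with the bound that the fixed-point-free proportion in a transitive group of degree $n\ge 2$ is at least $1/n$) that $\sum_i \frac{|C_i|}{|G_i|} < $ (something) forcing $G\setminus\bigcup_i \pi_i^{-1}(C_i)$ to be nonempty — or, if the densities are too large for a naive union bound, by exploiting independence coming from linear disjointness of suitable subfields, or by an inductive argument on the number of factors that tracks the specific permutation-group structure forced by even degree. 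Getting this last step to go through robustly, with the right quantitative input about fixed-point-free elements, is where the real work lies.
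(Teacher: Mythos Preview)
Your proposal has a genuine gap at exactly the point you flag as ``the heart of the matter,'' and the difficulty is structural rather than quantitative: the purely group-theoretic statement you are reaching for is \emph{false}. Take $G=(\mathbb{Z}/2)^2$ with its three transitive degree-$2$ actions (cosets of the three index-$2$ subgroups); every nontrivial element lies in exactly one of these subgroups, so every element of $G$ has a fixed point in at least one of the three actions, and there is no simultaneously fixed-point-free $\sigma$. This is realized by $f(x)=(x^2+1)(x^2+2)(x^2-2)$: the splitting field is $\mathbb{Q}(i,\sqrt{2})$ with Galois group $(\mathbb{Z}/2)^2$, all three irreducible factors have even degree, yet every Galois element fixes some root, and indeed $f$ has a root mod every prime. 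Your reduction from ``$f$ has no real root'' to ``every irreducible factor has even degree'' throws away information, and with only the weaker hypothesis the desired conclusion can fail. Union bounds, linear disjointness, or induction keyed to even degree cannot repair this, since the counterexample already lives in the simplest possible setting.

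What is missing is that ``no real root'' hands you a \emph{specific} fixed-point-free element: complex conjugation. If $K\subset\mathbb{C}$ is the splitting field, complex conjugation restricts to an element of $\mathrm{Gal}(K/\mathbb{Q})$, and it fixes a root precisely when that root is real. So under your contrapositive hypothesis, complex conjugation is the $\sigma$ you seek, and Chebotarev finishes the argument at once. This is exactly the paper's proof, run in the direct direction: every Galois element fixes a root, in particular complex conjugation does, hence some root is real.
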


This is an application of Chebotarev's density theorem - the proof itself is fairly straightforward, so much of the paper is dedicated to overviewing the background to Chebotarev's theorem, which is not straightforward. There is a wide literature about polynomials with a root mod $p$ for every $p$. For example \cite{Bilu} describes a criteria in terms of Galois theory to decide if a polynomial has this property. The papers \cite{Mishra, Spearman} show that families of polynomials similar to these two examples have a solution mod $n$ for every integer $n$, and \cite{Sonn} looks gives criteria for having a root in $\Q_p$ for every $p$. Products of two irreducible factors with this property are studied in \cite{Brandl}. We expand on this literature, noticing the common thread that these diverse constructions all have a real root. 

Our result was motivated by a problem on representing primes with positive definite quadratic forms. A binary quadratic form is an expression of the form $g(x,y) = ax^{2} +bxy +cy^{2} \in \Z[x,y] $, and its discriminant is  $-D =  b^{2} -4ac$ (we also assume $a>0$ without losing much generality). We call $f$ positive definite if $-D<0$, in which case $g(x,y) > 0$ for all $x,y \neq 0$.  The problem of which primes can be represented by a form $f$, that is, written as $p = g(m,n)$ for some integers $m$ and $n$ has been extensively studied. A necessary condition is that the equation $g(x,y) \equiv 0$ mod $p$ needs to have a non-trivial solution. This requires $-D$ to be a square mod $p$. In certain cases (when the class number of the discriminant $-D$ is $1$) this condition is sufficient, but in general it is not. A fantastic study of the representation of primes by quadratic forms can be found at \cite{cox}.  

A natural task is to look for quadratic forms that together represent all the primes. For instance $x^2+y^2, x^2+2y^2$ and $x^2-2y^2$ together cover all primes. This is because a sufficient criterion for $p$ being covered by each of these forms is that $-1, -2$ or $2$ is a square mod $p$ respectively (for each of these forms the requirement ``$-D$ is a square" is in fact sufficient - which stems from $-4$, $-8$ and $8$ each being quadratic discriminants of class number 1), and as we pointed before one of $-1, -2$ or $2$ is always a square mod $p$. A different example is $x^2+y^2$, $x^2-5y^2$, $x^2+5y^2$ and  $2x^2+2xy+3y^2$, which together cover all primes. These have discriminants $-4, 20, -20$ and $-20$, one of which is always being a square mod $p$. For $-20$ we need to include two forms to cover all the classes, due to the class number of the discriminant $-20$ being $2$ (see page 32 of \cite{cox} for an explanation that $x^2+5y^2$ and  $2x^2+2xy+3y^2$ together cover every prime for which $-20$ is a square mod $p$). For any odd cardinality set of discriminants whose product is a perfect square (this guarantees one of the discriminants is always a square mod $p$), a collection of forms achieving all $h(-D)$ classes of each discriminant $-D$ (which guarantees one of those forms will cover any prime $p$ for which that $-D$ is a square mod $p$) will cover every large prime. These constructions we provided always include a form such as $x^2-2y^2$ or $x^2-5y^2$ with positive discriminant - one can ask whether it is possible to achieve this task with only positive definite forms. We answer this negatively.

\begin{theorem}\label{positive}
There is no finite set of positive definite binary quadratic forms such that for every large prime $p$, at least one of the forms has a non-trivial solution to $g(x,y) \equiv 0$ mod $p$.
\end{theorem}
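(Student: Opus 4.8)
The plan is to deduce Theorem~\ref{positive} from Theorem~\ref{real root} by encoding the local solvability of a quadratic form as the splitting of an auxiliary polynomial that has no real root. First I would record the elementary fact that a positive definite form $f(x,y) = ax^2+bxy+cy^2$ of discriminant $-D$ (so $a>0$ and $D>0$) satisfies, for every prime $p\nmid a$, the equivalence
\[
f(x,y)\equiv 0 \pmod{p}\ \text{has a nontrivial solution} \iff -D\ \text{is a square mod}\ p.
\]
This comes from completing the square, $4a\,f(x,y) = (2ax+by)^2 + D y^2$, together with the observation that a nontrivial zero mod $p$ must have $y\not\equiv 0$ (otherwise $ax^2\equiv 0$ forces $x\equiv 0$ as well, since $p\nmid a$), so one may normalize $y=1$ and solve the resulting one-variable quadratic. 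Note the degenerate case $p\mid D$ is consistent on both sides.

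Next, suppose for contradiction that $f_1,\dots,f_k$ are positive definite binary quadratic forms, with discriminants $-D_1,\dots,-D_k$ and leading coefficients $a_1,\dots,a_k$, such that every large prime $p$ admits a nontrivial solution to $f_i(x,y)\equiv 0\pmod{p}$ for some $i$. Choose $N$ large enough that this holds for all $p>N$ and that no prime $p>N$ divides $a_1\cdots a_k$. Consider
\[
g(x) = \prod_{i=1}^k (x^2 + D_i) \in \mathbb{Z}[x].
\]
By the equivalence above, for every prime $p>N$ some $-D_i$ is a square mod $p$, which is exactly the statement that the factor $x^2+D_i$—and hence $g$—has a root mod $p$. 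Thus $g$ has a root mod $p$ for every large prime $p$, so by Theorem~\ref{real root} it has a real root. But each $D_i>0$, so $g(x)=\prod_i(x^2+D_i)>0$ for every real $x$, a contradiction.

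The argument is short, and essentially all of its content is the reduction in the first paragraph; everything after it is immediate from Theorem~\ref{real root}. I therefore expect the only delicate point to be the bookkeeping there: checking that ``nontrivial solution modulo $p$'' is genuinely equivalent to $-D$ being a square modulo $p$, and discarding the finitely many primes dividing the leading coefficients, where a form acquires the spurious solution $(x,y)=(1,0)$ without $-D$ being a square. This is a minor obstacle rather than a real one.
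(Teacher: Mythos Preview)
Your proof is correct and follows essentially the same approach as the paper: reduce the local solvability of the forms to a product of quadratics having a root mod every large $p$, then apply Theorem~\ref{real root} to obtain a real root, contradicting positive definiteness. The only cosmetic difference is that the paper uses the dehomogenized forms $\prod_i (a_i t^2 + b_i t + c_i)$ directly, whereas you complete the square first and use $\prod_i (x^2 + D_i)$; the two polynomials are related by an affine change of variable, and your version is arguably a bit more careful about the finitely many primes dividing the leading coefficients.
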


By non-trivial we mean a solution other than $(x,y) = (0,0)$ mod $p$. In particular there is no finite set of positive definite binary quadratic forms together representing all the primes, since each solution to $g(x,y)= p$ provides a non-trivial solution of $g(x,y) \equiv 0$ mod $p$ (in a trivial solution $p^2$ divides $g(x,y)$).  Will Jagy independently asked this question on Mathoverflow, and Lucia \cite{Lucia} gave a proof based on Dirichlet's theorem and density considerations. We offer a proof using our main result on polynomials with a root mod $p$ for every $p$.

\subsection*{Acknowledgement} We thank the anonymous referees, whose detailed comments helped us refine our proofs and exposition.

\section{Background and examples}

Let us begin recalling the definition of the Frobenius automorphism of Galois extensions of $\Q$, and the statement of Chebotarev's density theorem. 

Given a polynomial $f \in \mathbb{Q}[x]$, let $K \subset \mathbb{C}$ be the splitting field of $f$ over $\mathbb{Q}$ (the smallest extension of $\Q$ containing all roots of $f$), with Galois group $G = Aut(K/\mathbb{Q})$, and let $O_K$ be the ring of algebraic integers in $K$. For a prime $p \in \Z$, let $\mathcal{P}$ be a maximal ideal in $O_K$ containing the ideal $pO_K$ (it is a general algebraic fact that every proper ideal in a ring is inside at least one maximal ideal - in the language of algebraic number theory this called a prime ideal over $K$ above $p$). It is handy to know that $\mathcal{P}\cap \mathbb{Z} = p \mathbb{Z}$ for such an ideal. 

If $p$ does not divide the discriminant of $K/\Q$, there always exists a unique automorphism $\pi \in G$ with the property $\pi(x) \equiv x^p$ mod $\mathcal{P}$ for every $x \in O_K$ (meaning $\pi(x) -x^p \in \mathcal{P}$). This is called the Frobenius automorphism. This automorphism may change deppending on our choice of $\mathcal{P}$, but if we choose a different maximal ideal $\mathcal{P}$ containing $pO_K$, the new Frobenius is always a conjugate $\sigma \pi \sigma^{-1}$  of the other Frobenius, where $\sigma \in Aut(K/\Q)$ (this is because all other maximal ideals $\mathcal{P'}$ containing $p O_K$ are of the shape $\mathcal{P'} = \sigma(\mathcal{P})$). We losely refer to the Frobenius automorphism as $\pi_p$, keeping in mind that if the maximal ideal $\mathcal{P}$ is not specified $\pi_p$ is only defined up to conjugacy.

Due to this conjugacy ambiguity, it only makes sense to ask if $\pi_p$ belongs to a set $C \subset G$ if that set is closed under conjugation (that is, an union of conjugacy classes). We now state Chebotarev's density theorem, which describes the distribution of the Frobenius $\pi_p$ in $G$ as $p$ varies.

\begin{theorem}[Chebotarev's density theorem \cite{cheb}]
    With the set up above, for each conjugacy class $C$ of $G$, there exist infinitely many primes $p$ such that $\pi_p \in C$. In fact the proportion of primes $p$ satisfying $\pi_p \in C$ is $\frac{|C|}{|G|}$, that is
    $$\lim_{x \rightarrow \infty} \frac{\#\{p \le x: \pi_p \in C\}}{\pi(x)} = \frac{|C|}{|G|}.$$
\end{theorem}

A modern proof can be found in \cite{LA}. We won't be concerned with its proof on this paper, only with how to apply it. The following lemma explains the connection of the Frobenius automorphism with the roots mod $p$ of a polynomial

\begin{lemma} \label{fixedpoints}
    Let $p$ be a prime that does not divide the discriminant, leading coefficient or denominators of any coefficients of $f\in \Q[x]$. Then the number of roots of $f(x)$ mod $p$ is equal to the number of roots $\alpha \in K$ of $f(x)$ satisfying $\pi_p(\alpha) = \alpha$.
\end{lemma}

Notice that the number of fixed roots of $\pi_p$ does not depend on the conjugate of $\pi_p$ we choose, since a root $\alpha$ is fixed by $\pi_p$ iff the root $\sigma(\alpha)$ is fixed by $\sigma \pi_p \sigma^{-1}$. However in the proof below we work with a fixed maximal ideal $\mathcal{P}$, which fixes the conjugate of $\pi_p$ it defines.

The intuition behind the lemma is that a polynomial of degree $n$ always has all $n$ roots over some extension of $\mathbb{F}_p$, which can be thought as the roots over $K$ mapped by reduction mod $\mathcal{P}$ the ``residue field" $O_K/\mathcal{P}$, a finite field extension of $\F_p$ - but the ones that belong to the base field $\mathbb{F}_p$ are exactly the ones satisfying $\alpha^p \equiv \alpha$ - the ones fixed by Frobenius. For simplicity, we will prove it only in the case where $f$ is monic with integer coefficients.

\begin{proof}

Let $\alpha_1,...,\alpha_n \in K$ be the roots of $f$, which are in $O_K$ due to the assumption $f$ is monic with integer coefficients. We begin by noticing that if $p$ doesn't divide the discriminant of $f$, then $p \nmid D = \prod_{i \neq j}(\alpha_i-\alpha_j) \Rightarrow D = \prod_{i \neq j}(\alpha_i-\alpha_j) \notin \mathcal{P}$ (because $D$ is an integer and $\mathcal{P}\cap \mathbb{Z} = p \mathbb{Z}$), which means $\alpha_i -\alpha_j \notin \mathcal{P}$ for $i \neq j$. In other words, all the roots of $f$ are distinct mod $\mathcal{P}$.

Each solution $m$ of $f(m) \equiv 0$ mod $p$ must be equivalent to some $\alpha_i$ mod $\mathcal{P}$. This is because if $m$ is an integer, $p|f(m) \Rightarrow f(m) \in \mathcal{P} \Rightarrow (m-\alpha_1)...(m-\alpha_n) \in \mathcal{P} \Rightarrow m - \alpha_i \in \mathcal{P}$ for some $i$ (since $\mathcal{P}$ is maximal). Conversely, if $\alpha_i \equiv m$ mod $\mathcal{P}$ for some integer $m$, then $f(m) \equiv f(\alpha_i) = 0$ mod $\mathcal{P}$, which implies $f(m) \equiv 0$ mod $p$, since $\mathcal{P}\cap \mathbb{Z} = p \mathbb{Z}$. This means that to find out how many roots mod $p$ that $f$ has we just need to count how many of $\alpha_1,..., \alpha_n$ are equivalent to an integer mod $\mathcal{P}$.

If $\alpha_i \equiv m$ mod $\mathcal{P}$ for an some $m \in \mathbb{Z}$, then $\pi_p(\alpha_i) \equiv \alpha_i^p \equiv m^p \equiv m \equiv \alpha_i$ mod $\mathcal{P}$ (the fact that $m^p-m \in p O_K \subset \mathcal{P}$ follows from Fermat's little theorem, since $m$ is an integer). This implies $\pi_p(\alpha_i) = \alpha_i$, since an automorphism must send a root of $f$ to another root (since $\sigma(f(x)) = f(\sigma(x))$), and we established the only root congruent to $\alpha_i$ mod $\mathcal{P}$ is $\alpha_i$ itself. Conversely, if $\pi_p(\alpha_i) = \alpha_i$, we obtain $\alpha_i^p \equiv \alpha_i$ mod $\mathcal{P} \Rightarrow \alpha_i^p-\alpha_i \equiv \alpha_i(\alpha_i-1)...(\alpha_i-(p-1)) \equiv 0$ mod $\mathcal{P}$, which implies $\alpha_i \equiv m$ mod $\mathcal{P}$ for some $m \in \{0,...,p-1\}$.

So the number of solutions of $\pi_p(\alpha_i) = \alpha_i$ counts how many $\alpha_i$'s are congruent to some integer mod $\mathcal{P}$, completing the proof.

\end{proof}

We also sketch a proof of existence and uniqueness of the Frobenius automorphism, relying on the fact that any finite extension $K/\Q$ can be made of the shape $K = \Q[\beta]$ for some $\beta \in K$ (that's the primitive element theorem, see 7.3 of \cite{artingalois} for proof). For simplicity we also assume the minimal polynomial of $\beta$ is monic with integer coefficients, and that $O_K = \Z[\beta]$ (this can't always be arranged, but gives a good idea of where the Frobenius automorphism comes from).

\begin{proof}

Let the minimal polynomial $g$ of $\beta$ have roots $\beta = \beta_1$ and $\beta_2,...,\beta_n$. For $K = \Q[\beta]$, each of the $n$ automorphisms of $K/\Q$ is given by the map $\sigma(\beta) = \beta_i$ for some $i$ (which fully defines $\sigma$ on $K = \Q[\beta]$ - these other roots all belong to $K$ as well due to $K$ being a Galois extension of $\Q$).

We know from a binomial expansion that $g(\beta^p) \equiv g(\beta)^p = 0$ mod $\mathcal{P}$, that is $f(\beta^p)=(\beta^p-\beta_1)...(\beta^p-\beta_n) \in \mathcal{P} \Rightarrow \beta^p \equiv \beta_i$ mod $\mathcal{P}$ for some $\beta_i$. If $\sigma$ is the automorphism sending $\beta$ to $\beta_i$ we obtain for any integers $a_0,...,a_k$:
$$\sigma(a_0+a_1\beta+...+a_k \beta^k) = a_0+a_1 \beta_i+...+a_k \beta_i^k,$$ which is $$\equiv a_0+a_1\beta^p+...+a_k \beta^{pk} \equiv (a_0+a_1\beta+...+a_k \beta^k)^p\mod \mathcal{P},$$
so $\sigma(x) \equiv x^p \mod \mathcal{P}$ for every $x \in \Z[\beta] = O_K$, proving existence.

Uniqueness follows from the fact that if $p$ doesn't divide the discriminant of $g$ then the roots $\beta_1,..., \beta_n$ are distinct mod $\mathcal{P}$, so knowing $\sigma(\beta) \equiv \beta^p$ mod $\mathcal{P}$ determines what root $\sigma(\beta)$ is sent to, which fully determines $\sigma$. 
    
\end{proof}

We made a number of simplifying assumptions here to get the ideas across and cover extensions of $\Q$ in a minimal way. For a more detailed and general approach to the Frobenius automorphism of number field extensions, see chapter 6 of \cite{samuel}.

This set up is very useful in practice for studying root counts of a polynomial mod $p$. Let us see two examples, diving deeper in the polynomials we provided earlier.

\begin{example}
    $f(x) = (x^2+1)(x^2+2)(x^2-2)$ has roots $\pm i, \pm i \sqrt{2}$ and $\pm \sqrt{2}$. Its splitting field is $K = \Q[i, \sqrt{2}]$, a degree 4 extension of $\mathbb{Q}$. Any automorphism of $K$ must permute the roots of $g(x) = x^2+1$ (that is due to $g(\sigma(\alpha)) = \sigma(g(\alpha)) = 0$ for a root $\alpha$ of $g$), and the roots of $x^2-2$ as well. That is, $\sigma(i) = \pm i$ and $\sigma(\sqrt{2}) = \pm \sqrt{2}$. The value of $\sigma$ at these two roots determines the value of $\sigma(i \sqrt{2}) = \sigma(i) \sigma(\sqrt{2})$, fully determining the automorphism. With this, one can see that $\sigma \mapsto (\frac{\sigma(i)}{i}, \frac{\sigma(\sqrt{2})}{\sqrt{2}})$ identifies $Aut(K/\Q)$ as the group $\{\pm 1\} \times \{\pm 1\}$.

    The identity automorphism $(1,1)$ fixes all $6$ roots of $f$, and each of the other automorphisms fixes two roots of $f$ ($(1, -1)$ fixes $\pm i$, $(-1,1)$ fixes $\pm \sqrt{2}$ and $(-1,-1)$ fixes $\pm i \sqrt{2}$).

    Due to commutativity, each of the 4 elements of $Aut(K/\Q)$ is alone in a conjugacy class. We conclude by Chebotarev's theorem and Lemma \ref{fixedpoints} that for a proportion $\frac{1}{4}$ of primes, $f(x)$ has exactly 6 roots mod $p$, and for a proportion $\frac{3}{4}$ of primes it has exactly 2 roots mod $p$. These are the only possibilities for the root count of $f$ mod $p$ for a prime that doesn't divide its discriminant ($p \neq 2,3$).

    Which of the cases the Frobenius belongs to is determined by $(\frac{\pi_p(i)}{i}, \frac{\pi_p(\sqrt{2})}{\sqrt{2}}) \equiv ((-1)^{\frac{p-1}{2}},2^{\frac{p-1}{2}}) \equiv ((\frac{-1}{p}), (\frac{2}{p})) \mod p$.
\end{example}

\begin{example}
    $f(x) = (x^2+x+1)(x^3-2)$ has roots $\omega, \omega^2, \sqrt[3]{2}, \omega\sqrt[3]{2}$ and $\omega^2\sqrt[3]{2}$, where $\omega$ is a cube root of unity. The splitting field of $f$ is therefore $K = \Q[\sqrt[3]{2},\omega]$, a degree $6$ extension of $\Q$. An automorphism $\sigma$ permutes the roots of $g(x) = x^3-2$. With relations such as $\sigma(\omega) = \frac{\sigma(\omega\sqrt[3]{2})}{\sigma(\sqrt[3]{2})}$, one can use the action of $\sigma$ on the $3$ roots of $x^3-2$ to determine the action of $\sigma$ on all roots of $f$, fully determining $\sigma$. It can be verified each of those permutations of the roots of $x^3-2$ in fact provides an automorphism of $K/\Q$, so $Aut(K/\Q) \simeq S_3$.
    
    We then have $3$ cases depending on the conjugacy class of an automorphism $\sigma \in S_3$:

    \begin{itemize}

    \item If $\sigma$ fixes the $3$ roots of $x^3-2$, then it fixes all $5$ roots of $f$. From lemma \ref{fixedpoints}, if $\pi_p$ is an automorphism of this type, $f$ has $5$ roots mod $p$.

    \item If $\sigma$ acts on the 3 roots of $x^3-2$ like a permutation of cycle type (2)(1) in $S_3$. In this case one can deduce $\sigma$  flips $\omega$ and $\omega^2$. So in total $\sigma$ fixes a single root of $f$ in this case. From lemma \ref{fixedpoints}, if $\pi_p$ is an automorphism of this type, $f$ has $1$ root mod $p$.

    \item If  $\sigma$ is of cycle type (3) in $S_3$, one deduces it must fix $\omega$ and $\omega^2$. In this case $\sigma$ fixes 2 roots of $f$. From lemma \ref{fixedpoints}, if $\pi_p$ is an automorphism of this type, $f$ has $2$ roots mod $p$.

    \end{itemize}

    Hence for a prime $p$ not dividing the discriminant of $f$ ($p \neq 2,3$), $f$ will always have 5, 1 or 2 roots mod $p$. Furthermore, because $S_3$ has $1$ identity, $3$ elements of cycle type $(1)(2)$ (which form a conjugacy class), and $2$ elements of cycle type $(3)$ (another conjugacy class), we conclude from Chebotarev's theorem that the density of the set of primes in each of these cases are $\frac{1}{6}$, $\frac{3}{6}$ and $\frac{2}{6}$ respectively.
    
    Whether $\pi_p$ fixes $\omega$ or flips it with $\omega^2$ is determined by the value of $p$ mod $3$  (since $\pi_p(\omega) = \omega^p$), and in the case $p \equiv 1$ mod $3$ whether $\pi_p$ fixes $\sqrt[3]{2}$ is determined by whether $2^{\frac{p-1}{3}} \equiv 1$ mod $p$. This information fully determines the class of $\pi_p$.

\end{example}

\section{Proofs of Theorem~\ref{real root} and Theorem~\ref{positive}}

We are ready to prove the following result, which has Theorem \ref{real root} as a corollary.

\begin{theorem} \label{squarefree}
    Let $f \in \Q[x]$ be a polynomial with distinct complex roots, such that $f$ has at least $k$ roots mod $p$ for every large prime $p$. Then $f$ has at least $k$ real roots.
\end{theorem}

The two polynomials we provided are examples of the sharpness of this statement, for $k = 2$ and $k =1$ respectively.

\begin{proof}

Let $\alpha_1,...,\alpha_n \in K \subset \mathbb{C}$ be the complex roots of $f$.
Complex conjugation provides an automorphism of $K/\mathbb{Q}$ - name this automorphism $\theta$. By Chebotarev's density theorem, there exist infinitely many $p$ such that $\pi_p$ is conjugate to $\theta$, say $\pi_p = \sigma^{-1} \theta \sigma$ for some automorphism $\sigma \in Aut(K/\Q)$. By assumption, $f$ has at least $k$ roots mod $p$ for such $p$ - by Lemma \ref{fixedpoints} this implies at least $k$ of $\alpha_1,..., \alpha_n$ satisfy $\pi_p(\alpha_i) = \alpha_i$, which implies each of those satisfies $\sigma^{-1} \theta \sigma(\alpha_i) = \alpha_i \Rightarrow \theta(\sigma(\alpha_i)) = \sigma(\alpha_i)$. But $\theta$ is complex conjugation, so this implies each of those $\sigma(\alpha_i)$ is real. Because $\sigma$ is an automorphism it permutes the roots, so $\sigma(\alpha_i)$ are $k$ distinct roots of $f$ - we conclude $f$ has at least $k$ real roots, as desired. The assumption that $f$ has distinct roots is required in the application of Lemma \ref{fixedpoints} to make sure the discriminant of $f$ is non-zero, so large primes don't divide it.

\end{proof}

Notice also that by using the density from Chebotarev's theorem, at least $\frac{1}{|Aut(K/\mathbb{Q})|}$ of primes have $\pi_p$ conjugate to complex conjugation, so the assumption that $f$ has roots mod $p$ for every large $p$ may be weakened to $f$ has $k$ roots mod $p$ for a proportion greater than $1-\frac{1}{|Aut(K/\mathbb{Q})|}$ of primes - this is enough to force one of those primes to have $\pi_p$ conjugate to complex conjugation, and the rest of the proof works equally.

\begin{proof}[Proof of Theorem~\ref{real root}]

If $f$ has a root mod $p$ for every $p$, the product of its irreducible factors over $\Q[x]$ also has this property. Applying Theorem \ref{squarefree} to this polynomial with $k = 1$, we obtain it has a real root, which is also a root of $f$.
    
\end{proof}

\begin{proof}[Proof of Theorem~\ref{positive}]
For a large enough prime $p$, if the equation $g(x,y) =  ax^2+bxy+cy^2 \equiv 0$ has a non-trivial solution, then certainly $y \neq 0$ mod $p$, since if $y \equiv 0$ mod $p$ we obtain $ax^2 \equiv 0$ mod $p$, which assuming $p$ is large compared with $a>0$ would imply $x \equiv 0$ mod $p$ as well, contradicting that $(x,y)$ is non-trivial. Dividing by $y^2$ we obtain that $at^2+bt+c \equiv 0 $ mod $ p$ has a root  mod $p$, namely $t = xy^{-1}$.  So if one of the forms $a_ix^2+b_ixy+c_iy^2$ always has a non-trivial solution mod $p$, then
\[
f(t) = \prod_{i=1}^{n} (a_i t^2 + b_i t+c_i)
\]
has a root mod $p$ for every large prime $p$, which would imply by Theorem~\ref{real root} that $a_it^2+b_it+c_i$ has a real root for some $i$, i.e., it is not positive definite. This contradiction finishes the proof.

\end{proof}

Incorporating density in this argument we see that this product of quadratic polynomials fails to have a root mod $p$ for a proportion at least $\frac{1}{|Aut(K/\mathbb{Q})|}$ of primes (namely, any prime whose Frobenius is complex conjugation, which won't fix any of the roots of $f$, which are all non-real if all forms are positive definite). This is least $\frac{1}{2^n}$, because the splitting field of $f(t) = \prod_{i=1}^{n} (a_i t^2 + b_i t+c_i)$ is $K = K_n$ where $\mathbb{Q} = K_0 \subset K_1 \subset ... \subset K_n$, and each $K_{i}$ is obtained by including the roots of $a_it^2+b_it+c_i$ to $K_{i-1}$. So each of field extension $K_{i}/K_{i-1}$ has degree at most 2, which makes the degree of $K/\mathbb{Q}$ at most $2^n$ (in fact the Galois group of this extension is $\{\pm1\}^m$ for some $m \le n$). So a set of $n$ positive definite quadratic forms must fail to cover a proportion of at least $\frac{1}{2^n}$ of prime numbers.


\bibliographystyle{plain}
\bibliography{gold}{}
\end{document}